\newtheorem{thm}{Theorem}
\theoremstyle{definition}
\newtheorem{lem}{Lemma}
\newtheorem{crl}{Corollary}
\newcommand{\Z}{\mathbb{Z}}
\newcommand{\R}{\mathbb{R}}
\newcommand{\RP}{\mathbb{R}P}
\author{Cole Hugelmeyer} 
\title{Inscribed rectangles in a smooth Jordan curve attain at least one third of all aspect ratios. }
\begin{document} 
\maketitle

\begin{abstract}

We prove that for every smooth Jordan curve $\gamma$,  if $X$ is the set of all $r \in [0,1]$ so that there is an inscribed rectangle in $\gamma$ of aspect ratio $\tan(r\cdot \pi/4)$, then the Lebesgue measure of $X$ is at least $1/3$. To do this, we study sets of disjoint homologically nontrivial projective planes smoothly embedded in $\R\times \RP^3$. We prove that any such set of projective planes can be equipped with a natural total ordering. We then combine this total ordering with Kemperman's theorem in $S^1$ to prove that $1/3$ is a sharp lower bound on the probability that a M\"obius strip filling the $(2,1)$-torus knot in the solid torus times an interval will intersect its rotation by a uniformly random angle. 

\end{abstract}

\section{Introduction}

It is a long-standing conjecture that for every Jordan curve, there is an inscribed rectangle of every aspect ratio.  Even for smooth Jordan curves, little is known in general about the set of aspect ratios for inscribed rectangles. For instance, smooth Jordan curves always have inscribed rectangles of aspect ratio $1$ and $\sqrt{3}$, but no other aspect ratios have been proven to always be present \cite{Sch}\cite{Me}.  The question of whether every continuous Jordan curve has an inscribed square was first proposed by Toeplitz in 1911, and is a quite famous open problem. A 2014 survey of the inscribed square problem and related questions is given by Matschke \cite{survey}.  

A smooth Jordan curve is a smooth embedding $\gamma: S^1\to \R^2$, parameterizing a simple closed curve oriented counterclockwise. An inscribed rectangle of $\gamma$ is a set of four distinct points in $\R^2$ which are all in the image of $\gamma$ and which form the corners of a rectangle.  The aspect ratio of a rectangle is the ratio of its side lengths.
In this paper, we give a nontrivial lower bound for the measure of the set of aspect ratios realized by inscribed rectangles in a smooth Jordan curve:
\begin{thm}
Let $\gamma: S^1\to \R^2$ be a smooth Jordan curve, and let $X$ be the set of all $r \in [0,1]$ so that there is an inscribed rectangle in $\gamma$ of aspect ratio $\tan(r\cdot \pi/4)$. Then the Lebesgue measure of $X$ is at least $1/3$. 
\end{thm}

The proof of this theorem relies on a new topological discovery. Namely, there is a natural total ordering on any set of pairwise disjoint, homologically nontrivial projective planes smoothly embedded in $\R\times \RP^3$. This fact can be combined with Kemperman's theorem to prove Theorem 1.   In order to define this total ordering, we will need the following Lemma, which we will prove later.

Throughout this paper, we will use $\Z/2\Z$ coefficients for all homology and cohomology groups. We will often be working in the ambient space $\R\times\RP^3$. All set complements should be assumed to be complements within $\R\times \RP^3$. Thus, $Q^c =\R\times\RP^3 \setminus Q$. When we write $i_{A\hookrightarrow B}$, it means the inclusion map from $A$ to $B$. 
\begin{lem}
Let $P$ be a copy of $\RP^2$ smoothly embedded in $\R\times \RP^3$ so that $[P]$ generates $H_2(\R\times \RP^3)$. Let $\lambda$ be a large positive real number such that $P\subseteq (-\lambda,\lambda)\times \RP^3$. Then, $H^1(P^c)$ is a two-dimensional vector space over $\Z/2\Z$ for which the restriction maps to $H^1(\{\lambda\}\times \RP^3) \simeq \Z/2\Z$ and $H^1(\{-\lambda\}\times \RP^3) \simeq \Z/2\Z$ are independent. 
\end{lem}

We will now define a relation $\prec$ on disjoint homologically nontrivial projective planes in $\R\times \RP^3$. Let $P$ be a copy of $\RP^2$ smoothly embedded in $\R\times \RP^3$ so that $[P]$ generates $H_2(\R\times \RP^3)$. Let $\lambda$ be a large positive real number such that $P\subseteq (-\lambda,\lambda)\times \RP^3$, and let $i_{\lambda}:\{\lambda\}\times \RP^3\to P^c$ denote inclusion. The map $ i_{\lambda}^*:  H^1(P^c) \to H^1(\{\lambda\}\times \RP^3) \simeq \Z/2\Z$ is surjective, and $H^1(P^c)$ is two-dimensional, so the kernel of $i_{\lambda}^*$ is a 1-dimensional $\Z/2\Z$ vector space. We define $\omega_P$ to be the generator of the kernel of $i_{\lambda}^*$. From Lemma 1, we see that if $i_{-\lambda}: \{-\lambda\}\times \RP^3\to P^c$ denotes inclusion, then $i_{-\lambda}^*(\omega_P)$ is the generator of $H^1(\{-\lambda\}\times \RP^3)$. Thus, $\omega_P$ is the unique cohomology class that evaluates to zero on $\{\lambda\}\times \RP^1$ and evaluates to $1$ on $\{-\lambda\}\times \RP^1$. 
Let $P$ and $Q$ be two disjoint smoothly embedded copies of $\RP^2$ in $\R\times \RP^3$, such that $[P] =[Q]$ is the generator of $H_2(\R\times\RP^3)$. We say $P\prec Q$ if and only if $i_{P\hookrightarrow Q^c}^*(\omega_{Q})$ is the generator of $H^1(P)$. In other words, $P\prec Q$ if and only if $\omega_Q$ evaluates to $1$ on a homologically essential loop in $P$.

If $a$ and $b$ are real numbers, then $\{a\}\times \RP^2 \prec \{b\}\times \RP^2$ if and only if $a < b$. Thus, we can think of $\prec$ as being an extension of the left to right ordering in $\R$. It may be surprising that even though $\prec$ is defined on sub-manifolds of codimension 2 which may be knotted together in complicated ways, it is still a total ordering on any set of pairwise disjoint projective planes. This is the key topological fact upon which our main theorem is based.

\begin{thm}
Let $P_1,...,P_n$ be pairwise disjoint copies of $\RP^2$ smoothly embedded in $\R\times \RP^3$, so that $[P_i]$ generates $H_2(\R\times \RP^3)$ for all $i$. The reflexive relation on $\{P_1,...,P_n\}$ which extends $\prec$ is a total ordering. 
\end{thm}

In addition to the main result of this paper, this theorem can be used to give proofs of the fact that every smooth Jordan curve has an inscribed square and an inscribed rectangle of aspect ratio $\sqrt{3}$. We will prove these facts as corollaries in Section 3.

Let $T(n,m)$ be the subset of $\R^2\times S^1$ consisting of all points of the form $(\cos(\theta),\sin(\theta),e^{i\phi})$ where $\theta$ and $\phi$ are real numbers such that $ n\cdot \theta-m\cdot \phi \in 2\pi\Z$. This is called the $(n,m)$ torus link.

Consider the $S^1$ action on $ \R^2 \times S^1 \times [0,\infty)$ which comes from multiplying an element of $S^1$ with the $S^1$ coordinate. If $X\subseteq  \R^2\times S^1 \times [0,\infty)$ and $u\in S^1$, then $u\cdot X$ is defined to be $ \{(x,u \cdot y, z): (x,y,z)\in X\}$.  In this paper, we will always think of $S^1$ as the group of unit complex numbers.

We call a map $S^1\to \R^2\times S^1$ monotone if its projection to $\R^2$ is a smooth embedding. 

\begin{thm}
Let $M$ be a M\"obius strip smoothly embedded in $ \R^2 \times S^1 \times [0,\infty) $ such that $\partial M\subseteq \R^2 \times S^1 \times \{0\} $ is monotone and isotopic to $T(2,1)\times \{0\}$. Let $u$ be a uniformly random element of $S^1$. Then we have $$\mathbf{P}(M\cap\;u\cdot M \neq \varnothing) \geq 1/3$$ 
\end{thm}

\begin{thm}
There exists a M\"obius strip $M$ smoothly embedded in $ \R^2 \times S^1 \times [0,\infty) $ with $\partial M = T(2,1)\times\{0\}$ such that $M\cap (e^{i\theta})\cdot M \neq \varnothing$ if and only if $\theta\in [2\pi/3, 4\pi/3]$. Thus, the bound in Theorem 3 is tight. 
\end{thm}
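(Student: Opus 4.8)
The plan is to construct the M\"obius strip $M$ explicitly as a ``swept'' family of arcs parametrized by the $S^1$-coordinate, chosen so that the self-intersection behavior under rotation is easy to control. First I would fix a reference picture: think of $\C\times S^1\times[0,\infty)$ as the solid torus $\C\times S^1$ thickened in the $[0,\infty)$ direction, and for each $y = e^{i\phi}\in S^1$ prescribe a short arc $A_\phi \subset \C\times\{y\}\times[0,\infty)$ whose two endpoints lie on $T(2,1)\times\{0\}$ (the two points of the knot lying over angle $\phi$), and which otherwise lies in the interior. As $\phi$ runs once around $S^1$, the two boundary points swap, which forces the resulting band $M = \bigcup_\phi A_\phi$ to be a M\"obius strip with $\partial M = T(2,1)\times\{0\}$; I would take $A_\phi$ to depend smoothly on $\phi$ and to be disjoint for distinct $\phi$ by pushing it up into the $[0,\infty)$ factor by a height depending on $\phi$ in a way that is compatible with the boundary gluing (e.g.\ zero height only at the two boundary points).

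The key design choice is to make the $\C$-profile of $A_\phi$ concentrated near a single point of the unit circle, say near $e^{i\psi(\phi)}$ for a carefully chosen ``phase function'' $\psi$. Then $M$ and $u\cdot M = M$ rotated by $u = e^{i\theta}$ in the $S^1$-factor will be disjoint over a given angle $\phi$ precisely when either the two arcs $A_\phi$ and $A_{\phi-\theta}$ occupy disjoint regions in $\C\times[0,\infty)$, or when they occupy overlapping $\C$-regions but are separated in the height coordinate. I would arrange the height function and the $\C$-profile so that, after doing the bookkeeping, intersection over some angle $\phi$ occurs if and only if the ``combinatorial type'' of the pair $(\phi,\phi-\theta)$ is of a certain form, and then check that this happens exactly for $\theta\in[2\pi/3,4\pi/3]$. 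Concretely I expect the cleanest route is: let the arc over $\phi$ live in a thin neighborhood of the point at $\C$-angle $\phi/2$ (this is the natural ``half-angle'' forced by $T(2,1)$), at height $h(\phi)$ for a periodic bump function $h$; then $A_\phi$ and $A_{\phi-\theta}$ can only meet if $\phi/2$ and $(\phi-\theta)/2$ (mod the relevant identification coming from the $(2,1)$ wrapping) are close, i.e.\ $\theta$ is near $0$ or near $2\pi$; to instead force intersection near $\theta = 2\pi/3$ one should use a phase function $\psi(\phi)$ that is \emph{not} monotone but oscillates, so that the set of $\theta$ for which some fiber pair collides is a prescribed arc. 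I would reverse-engineer $\psi$ and $h$ from the target set $[2\pi/3,4\pi/3]$.

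After the construction, the verification splits into three checks. (i) $M$ is a smoothly embedded M\"obius strip: embeddedness follows from the disjointness of the $A_\phi$ for distinct $\phi$ together with each $A_\phi$ being an embedded arc; the M\"obius (non-orientability) property follows from the endpoint-swap monodromy as $\phi$ goes around once; smoothness is arranged by taking all the defining functions smooth and checking the gluing along $\partial M$. (ii) $\partial M = T(2,1)\times\{0\}$ on the nose, which is immediate from how the endpoints of $A_\phi$ were chosen. (iii) The intersection pattern: $M\cap (e^{i\theta})\cdot M\neq\varnothing$ iff there exist $\phi$ with $A_\phi \cap (e^{i\theta}\cdot A_{\phi-\theta})\neq\varnothing$ as subsets of $\C\times\{e^{i\phi}\}\times[0,\infty)$ — wait, more carefully, $(e^{i\theta})\cdot M$ over the fiber $e^{i\phi}$ is $e^{i\theta}\cdot A_{\phi-\theta}$ but note the $\C$-coordinate is unchanged by the $S^1$-action, so it is literally $A_{\phi-\theta}$ sitting in the $e^{i\phi}$ fiber — and then one computes the collision set in $\theta$ directly from the formulas for the $\C$-profile and the height. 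This last computation is where all the force of the specific choices is used, and it should be arranged to reduce to an elementary inequality.

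The main obstacle will be step (iii) combined with the need to \emph{simultaneously} satisfy embeddedness: it is easy to write down a swept band whose fibers collide exactly over $\theta\in[2\pi/3,4\pi/3]$, and easy to write down an embedded M\"obius strip, but making one band do both — in particular ensuring that the ``collision locus'' is exactly the closed arc $[2\pi/3,4\pi/3]$ and not something slightly larger or smaller, while keeping $M$ itself embedded and with the exact boundary $T(2,1)\times\{0\}$ — requires choosing the phase and height functions with some care and then doing an honest (though elementary) interval-arithmetic verification. I would expect to spend most of the proof setting up a one- or two-parameter explicit family and then checking the collision inequality; a secondary subtlety is making sure the required disjointness of the $A_\phi$ is genuinely achievable given that their $\C$-profiles must overlap for $\theta$ in the target range (this is exactly what the height function $h$ is for, and one must confirm $h$ can be chosen periodic, smooth, vanishing only where needed, and separating the right fiber pairs). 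Note that since $\partial M = T(2,1)\times\{0\}$ exactly, Theorem 2 applies and gives the matching lower bound $P(\theta\in[2\pi/3,4\pi/3]) = 1/3$, so this construction does show the bound is tight.
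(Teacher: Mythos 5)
The proposal is a plan, not a proof, and the gap is exactly at the crucial step. You set up a reasonable framework — build $M$ fiber by fiber as a union of arcs $A_\phi \subset \C\times\{e^{i\phi}\}\times[0,\infty)$ joining the two knot points $\pm e^{i\phi/2}$, observe that $e^{i\theta}\cdot M$ meets $M$ iff some $A_\phi$ meets $A_{\phi-\theta}$, and note that the monodromy of the two endpoints makes the band a M\"obius strip. All of that is fine. But then the entire force of the theorem is concentrated in ``reverse-engineer $\psi$ and $h$ from the target set $[2\pi/3,4\pi/3]$,'' and you never do it. You never exhibit $\psi$, never exhibit $h$, and never verify the iff. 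That is the theorem.

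Worse, there are signs you haven't thought through whether the reverse-engineering is even possible in the form you sketch. First, a factual slip: the arc $A_\phi$ cannot be ``concentrated near $e^{i\phi/2}$'' because its two endpoints are antipodal; it must cross the disk. Second, the concern about keeping the $A_\phi$ disjoint is vacuous — distinct $\phi$ live in distinct fibers $\C\times\{e^{i\phi}\}\times[0,\infty)$, so $M$ is automatically embedded; the real constraint is elsewhere. Third, and most seriously, the natural realization of your sketch runs into an intermediate-value obstruction. If, say, you try to confine $A_\phi$ to the vertical plane $\{re^{i\phi/2}\}\times[0,\infty)$ so that two arcs can only meet over the origin at height $h(\phi)$, then for \emph{every} $\theta$ the continuous periodic function $\phi\mapsto h(\phi)-h(\phi-\theta)$ has mean zero and hence a zero, so every rotate collides with $M$ — the collision set is all of $S^1$, not $[2\pi/3,4\pi/3]$. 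Escaping this requires the arcs to weave in $\C$ in a way that is far from obvious, and your proposal gives no indication of how. Your claim that it is ``easy to write down a swept band whose fibers collide exactly over $\theta\in[2\pi/3,4\pi/3]$'' is exactly what needs proof and is not obviously true.

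For contrast, the paper's proof takes a genuinely different and more structured route: it starts from a punctured Boy's surface in $\C\times[0,\infty)$ and chooses an $S^1$-valued function on it, using the triple point and the three double arcs. Self-intersections of $u\cdot M$ with $M$ are then read off from the double-point set of the immersion: at the triple point the $S^1$-phases are the three cube roots of unity, and along each double arc the phase difference sweeps the short path between the two nontrivial cube roots, i.e.\ the arc $[2\pi/3,4\pi/3]$. This cleanly produces both the lower bound (intersections only at those angles) and the upper bound (all such angles are achieved) from the combinatorics of Boy's surface. Your fiberwise approach might in principle be made to work, but as written it does not constitute a proof, and it ignores precisely the global/topological constraints that the Boy's-surface construction is designed to handle.
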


\begin{proof}[Proof of Theorem 1.]
Let $\mu$ be the map $\text{Sym}^2(S^1) \to \R^2\times S^1\times [0,\infty)$ given by the piecewise formula $$ \mu\{x,y\} = \left(\frac{\gamma(x) + \gamma(y)}{2},\arg^2(\gamma(x)-\gamma(y)),\|\gamma(x)-\gamma(y)\|\right)  \;\;\;\; \text{if} \;\;\;x\neq y$$  $$  \mu\{x,x\} = \left(\gamma(x),\arg^2(\gamma'(x)),0 \right) $$ Here, $\text{Sym}^2(S^1)$ is the M\"obius strip of unordered pairs of elements of $S^1$, the norm $\|\cdot\|$ is the Euclidean norm, and $\arg^2: \R^2\setminus \{(0,0)\}\to S^1$ denotes the map such that $\arg^2(r\cos(\theta),r\sin(\theta)) = e^{i\cdot 2\theta}$, taking a vector $v$ to the point on $S^1$ with twice the angle from the $x$-axis as $v$.  

We claim that $\mu$ parameterizes a smoothly embedded M\"obius strip, $M$, and $\partial M$ is monotone and isotopic to $T(2,1)\times \{0\}$. To see that $\mu$ is an embedding, we observe that a self-intersection of the M\"obius strip would give us two unordered pairs of points on the Jordan curve with the same midpoint, the same angle of the line that passes through the two points, and the same distance from each other. This is only possible if the unordered pairs are the same in the plane, which means they are the same in $S^1$ since $\gamma$ is injective.  Taking this line of reasoning one step further, there is a diffeomorphism between the space of unordered pairs of distinct points in $\R^2$, and the space $\R^2\times S^1\times (0,\infty)$ given by $\{a,b\}\mapsto (\frac{1}{2}(a+b), \arg^2(a-b), \|a-b\|)$.  Since $\mu$ factors through this diffeomorphism away from the boundary of the M\"obius strip, we see that the differential of $\mu$ must be nonsingular on the interior of the M\"obius strip because the differential of $\gamma$ is nonsingular. The differential of $\mu$ near the boundary of the M\"obius strip projects nontrivially in both the midpoint coordinate and the distance coordinate, so it must be nonsingular at the boundary of the M\"obius strip as well. Thus, we see that $\mu$ is a smooth embedding. 

Now we must check that the boundary of $M$ is monotone and isotopic to $T(2,1)\times\{0\}$. First of all, the projection of $\partial M$ to $\R^2$ is the Jordan curve itself, so $\partial M$ is monotone.  As the Jordan curve is traversed once around, the tangent vector makes one full turn, so $\arg^2(\gamma'(x))$ makes two full traversals of $S^1$. Therefore, $\partial M$ is isotopic to $T(2,1)\times\{0\}$. We can now apply Theorem 3 to $M$ to see that for at least one third of all $u\in S^1$, we have $M \cap u\cdot M \neq \varnothing$. If $u = e^{i\theta}$, such intersections correspond to inscribed rectangles of aspect ratio $\tan(\theta/4)$, because these intersections give two pairs of points with the same midpoint, at the same distance apart, and with the angle between their line segments equal to $\theta/2$. Furthermore, $u^{-1}$ gives the same aspect ratio as $u$, so by symmetry we have at least 1/3 of the $\theta\in [0,\pi]$. Substituting $r = \theta/\pi$ gives the theorem.    \end{proof}

\section{Ordering Projective Planes}

We begin by proving Lemma 1, thus establishing that the relation $\prec$ is well-defined.
\begin{proof}[Proof of Lemma 1]
$H^1(P^c)$ is canonically isomorphic to $\text{Hom}(H_1(P^c),\Z/2\Z)$, so it suffices to prove that $H_1(P^c)$ is a two-dimensional $\Z/2\Z$ vector space with a basis consisting of the homology classes $[\{\lambda\}\times \RP^1]$ and $[\{-\lambda\}\times \RP^1]$. 

We first claim that $[\{\lambda\}\times \RP^1]$ and $[\{-\lambda\}\times \RP^1]$ are linearly independent. Since we are working over $\Z/2\Z$, it suffices to prove that they are distinct nonzero vectors. They are nonzero because they both map to the nonzero homology class in $\R\times \RP^3$ under inclusion. To see that they are distinct, take a 2-chain $\Sigma$ inside $[-\lambda,\lambda]\times \RP^3$ with $\partial \Sigma = \{\lambda\}\times \RP^1 \cup \{-\lambda\}\times \RP^1$. Without loss of generality, we may assume $\Sigma$ intersects $\{\lambda-\varepsilon\}\times \RP^2$ transversely once, for some small $\varepsilon > 0$. Since $P$ is homologous to $\{\lambda-\varepsilon\}\times \RP^2$, this implies that $\Sigma$ has nontrivial intersection with $P$ and thus cannot pass to a 2-chain in $ [-\lambda,\lambda]\times \RP^3 \setminus P$. Therefore, $[\{\lambda\}\times \RP^1]\neq[\{-\lambda\}\times \RP^1]$ in $H_1([-\lambda,\lambda]\times \RP^3\setminus P)\simeq H_1(P^c)$.

It remains to show that $\dim(H_1(P^c))\leq 2$. It suffices to prove that the kernel of the map $f: H_1(P^c)\to H_1(\R\times \RP^3)$ is at most one-dimensional. To prove this, we will show that $\ker(f)$ is generated by the homology class, $m$, represented by a meridian of a tubular neighborhood of $P$. Let $h$ be a 1-cycle representing an element of $\ker(f)$. Then, $h$ is the boundary of a 2-chain $\Sigma$ inside $\R\times \RP^3$ which intersects $P$ transversely. If we cut out a small disk from $\Sigma$ wherever it intersects $P$, we get a 2-chain $\Sigma'$ inside $P^c$ whose boundary consists of $h$, along with some number of loops which are all meridians of a tubular neighborhood of $P$. Therefore, $[h]$ is a multiple of $m$. 
\end{proof}

Let $P$ be a copy of $\RP^2$ smoothly embedded in $\R\times \RP^3$ so that $[P]$ generates $H_2(\R\times \RP^3)$, and let $N$ be a closed tubular neighborhood of $P$. Let $\pi: \partial N\to P$ be the projection from the fiber bundle structure on the closure of $N$, coming from the fact that it is a tubular neighborhood. We write $\sigma_P$ to denote the generator of $H^2(P)$. We use $N^o$ to denote the interior of $N$.

\begin{lem}
The cup product $ i_{\partial N\hookrightarrow P^c}^*(\omega_P) \cup \pi^*(\sigma_P)$ is nonzero in $H^3(\partial N)$. 
\end{lem}

\begin{proof}
Let $\alpha$ be the generator of $H^2(\R\times \RP^3)$. Let $\lambda$ be a large positive number so that $N \subseteq (-\lambda, \lambda)\times \RP^3$. Let $X = [-\lambda,\lambda]\times \RP^3\setminus N^o$. We define $\xi\in H^3(X)$ to be the cup product $i_{X\hookrightarrow P^c}^*(\omega_P)\cup i_{X\hookrightarrow \R\times\RP^3}^*(\alpha)$. Let $L$ be a 1-manifold with boundary which represents the Poincar\'e dual to $\xi$.  The parity of the number of boundary points of $L$ on a boundary component $C\subseteq \partial X$ is given by the image of $\xi$ in $H^3(C) \simeq \Z/2\Z$. Since $\omega_P$ maps to zero in $H^1(\{\lambda\}\times \RP^3)$, we know that $\xi$ maps to zero in $H^3(\{\lambda\}\times \RP^3)$. Since $\omega_P$ maps to the nonzero element of $H^1(\{\lambda\}\times \RP^3)$ and the cup product map $\cup : H^1(\RP^3)\otimes H^2(\RP^3)\to H^3(\RP^3)$ is nontrivial, we see that $\xi$ maps to the nonzero element of $H^3(\{-\lambda\}\times \RP^3)$. This tells us that $L$ has an even number of boundary points on $\{\lambda\}\times \RP^3$ and an odd number of boundary points on $\{-\lambda\}\times \RP^3$. Therefore, $L$ has an odd number of boundary points on $\partial N$. It now suffices to prove that the image of $\xi$ in $H^3(\partial N)$ is equal to the cup product $i_{\partial N\hookrightarrow P^c}^*(\omega_P) \cup \pi^*(\sigma_P)$. To prove this claim, note that $i_{P\hookrightarrow \R\times\RP^3}\circ \pi$ is homotopic to $i_{\partial N \hookrightarrow \R\times\RP^3}$, so we have $i_{\partial N\hookrightarrow \R\times\RP^3}^*(\alpha) = \pi^*(i_{P\hookrightarrow \R\times\RP^3}^*(\alpha))$. Since $P$ represents a generator of $H_2(\R\times \RP^3)$, we know that $i_{P\hookrightarrow \R\times\RP^3}^*(\alpha) = \sigma_P$. Therefore, $i_{\partial N\hookrightarrow \R\times\RP^3}^*(\alpha) = \pi^*(\sigma_P)$. We then have $$ i_{\partial N \hookrightarrow X} ^*(\xi) = i_{\partial N \hookrightarrow X} ^*( i_{X\hookrightarrow P^c}^*(\omega_P)\cup i_{X\hookrightarrow \R\times\RP^3}^*(\alpha)) = i_{\partial N\hookrightarrow P^c}^*(\omega_P)\cup i_{\partial N\hookrightarrow \R\times\RP^3}^*(\alpha) = i_{\partial N\hookrightarrow P^c}^*(\omega_P) \cup \pi^*(\sigma_P)$$ which proves our claim.
\end{proof}

Now, let $P$ and $Q$ be two disjoint smoothly embedded copies of $\RP^2$ in $\R\times \RP^3$, each of which represents the generator of $H_2(\R\times \RP^3)$. 

\begin{lem}
The relation $\prec$ is antisymmetric. That is to say, $(P\prec Q) \iff \neg(Q\prec P)$.
\end{lem}

\begin{proof}
Let $N_P$ and $N_Q$ be small closed tubular neighborhoods of $P$ and $Q$ respectively. Let $\lambda$ be a positive real number so that $N_P\cup N_Q \subseteq (-\lambda , \lambda)\times \RP^3$. Let $ X= [-\lambda,\lambda] \times \RP^3 \setminus (N_P^o\cup N_Q^o) $. Let $\omega_0$ denote the unique nontrivial element of $H^1(\R\times \RP^3)$. We define $\xi$ to be the cohomology class in $H^3(X)$ given by the three-fold cup product $\xi = i_{X\hookrightarrow P^c}^*(\omega_P)\cup i_{X\hookrightarrow Q^c}^*(\omega_Q)\cup i_{X\hookrightarrow \R\times\RP^3}^*(\omega_0)$.  Let $L$ be a 1-manifold with boundary which is Poincar\'e dual to $\xi$.  Let $n_P, n_Q, n_{-\lambda}, $ and $n_\lambda$ be the number of boundary points of $L$ on the boundary components $\partial N_P$, $\partial N_Q$, $\{-\lambda\}\times\RP^3$, and $\{\lambda\}\times \RP^3$ respectively. Then we know that $n_P+ n_Q+ n_{-\lambda}+ n_\lambda$ is even, and we also know that the parity of the number of boundary points of $L$ on a boundary component $C\subseteq \partial X$ is given by the image of $\xi$ in $H^3(C) \simeq \Z/2\Z$. We therefore know that $n_{\lambda}$ is even because $\omega_P$ and $\omega_Q$ map to zero in $\{\lambda\}\times \RP^3$ by definition. Furthermore, the cohomology classes $\omega_P$, $\omega_Q$, and $\omega_0$ all map to the nontrivial element of $H^1(\{-\lambda\}\times \RP^3)$ whose cup product cube is the nontrivial element of $H^3(\{-\lambda\}\times \RP^3)$. This means $\xi$ maps to the nontrivial element of $H^3(\{-\lambda\}\times \RP^3)$ so $n_{-\lambda}$ is odd. 

We claim that $n_P$ is odd if and only if $P\prec Q$ and $n_Q$ is odd if and only if $Q\prec P$. Without loss of generality it suffices to prove our claim just for $n_P$. The definition of $\prec$ states that $P\prec Q$ if and only if $i_{P\hookrightarrow Q^c}^*(\omega_Q) \neq 0$. By nontriviality of the cup product in $P$, have $P\prec Q$ if and only if the cup product $i_{P\hookrightarrow Q^c}^*(\omega_Q)\cup i_{P\hookrightarrow \R\times \RP^3}^*(\omega_0)$ is nontrivial. Let $1_{P\prec Q}$ be $1$ if $P\prec Q$ and $0$ otherwise. We have $$i_{P\hookrightarrow Q^c}^*(\omega_Q)\cup i_{P\hookrightarrow \R\times \RP^3}^*(\omega_0) = 1_{P\prec Q} \cdot \sigma_P$$ Since $i_{P\hookrightarrow Q^c}\circ \pi$ is homotopic to $i_{\partial N_P\hookrightarrow Q^c}$ and $i_{P\hookrightarrow \R\times\RP^3}\circ \pi$ is homotopic to $i_{\partial N_P\hookrightarrow \R\times\RP^3}$, we have $$i_{\partial N_P \hookrightarrow Q^c}^*(\omega_Q)\cup i_{\partial N_P\hookrightarrow \R\times\RP^3}^*(\omega_0)= \pi^*(i_{P\hookrightarrow Q^c}^*(\omega_Q)\cup i_{P\hookrightarrow \R\times\RP^3 }^*(\omega_0)) = 1_{P\prec Q}\cdot \pi^*(\sigma_P)  $$
And since $i_{\partial N_P \hookrightarrow Q^c}^*(\omega_Q)\cup i_{\partial N_P\hookrightarrow \R\times\RP^3}^*(\omega_0) = i^*_{\partial N_P\hookrightarrow X}(i_{X\hookrightarrow Q^c}^*(\omega_Q) \cup i_{X\hookrightarrow \R\times\RP^3}^*(\omega_0)) $, we know that $$ i^*_{\partial N_P\hookrightarrow X}(i_{X\hookrightarrow Q^c}^*(\omega_Q) \cup i_{X\hookrightarrow \R\times\RP^3}^*(\omega_0))  = 1_{P\prec Q}\cdot \pi^*(\sigma_P)$$
Now, we see that $$ i^*_{\partial N_P \hookrightarrow X}(\xi) =  i^*_{\partial N_P \to X}( i_{X\hookrightarrow P^c}^*(\omega_P)\cup i_{X\hookrightarrow Q^c}^*(\omega_Q)\cup i_{X\hookrightarrow \R\times\RP^3}^*(\omega_0))$$ $$ = i^*_{\partial N_P \hookrightarrow P^c}(\omega_P)\cup i^*_{\partial N_P\hookrightarrow X}(i_{X\hookrightarrow Q^c}^*(\omega_Q) \cup i_{X\hookrightarrow \R\times\RP^3}^*(\omega_0))  = 1_{P\prec Q}\cdot i^*_{\partial N_P \hookrightarrow P^c}(\omega_P)\cup\pi^*(\sigma_P) $$
By Lemma 2, we have $ i^*_{\partial N_P \hookrightarrow P^c}(\omega_P)\cup\pi^*(\sigma_P) = \sigma_{\partial N_P} $, where $\sigma_{\partial N_P}$ denotes the generator of $H^3(\partial N_P)$. Therefore, we have $$  i^*_{\partial N_P \hookrightarrow X}(\xi)  = 1_{P\prec Q}\cdot \sigma_{\partial N_P}$$ Since the parity of $n_p$ equals that of $i^*_{\partial N_P \hookrightarrow X}(\xi) $, and $\sigma_{\partial N_P}$ is nontrivial, this equation demonstrates our claim that $n_P$ is odd if and only if $P\prec Q$.

 Finally, since $n_\lambda$ is even, $n_{-\lambda}$ is odd, and $n_P+ n_Q+ n_{-\lambda}+ n_\lambda$ is even, we see that exactly one of $n_P$ and $n_Q$ are odd. Therefore, exactly one of the propositions $P\prec Q$ and $Q\prec P$ is true.  
\end{proof}

Suppose we have a triple $P,Q,R$ of pairwise disjoint smoothly embedded copies of $\RP^2$ in $\R\times \RP^3$, each of which represents the generator of $H_2(\R\times \RP^3)$. 

\begin{lem}
The relation $\prec$ is transitive. That is to say, if $P\prec Q$ and $Q\prec R$, then $P\prec R$
\end{lem}

\begin{proof}
Let $N_P$, $N_Q$, and $N_R$ be small closed tubular neighborhoods of $P,Q,$ and $R$ respectively. Let $\lambda$ be a positive real number so that $N_P\cup N_Q \cup N_R \subseteq (-\lambda , \lambda)\times \RP^3$. Let $ X= [-\lambda,\lambda] \times \RP^3 \setminus (N_P^o\cup N_Q^o \cup N_R^o) $. We define the cohomology class $\xi$ to be the three-fold cup product $i_{X\hookrightarrow P^c}^*(\omega_P)\cup i_{X\hookrightarrow Q^c}^*(\omega_Q)\cup i_{X\hookrightarrow R^c}^*(\omega_R)\in H^3(X)$. Let $L$ be a 1-manifold with boundary which represents the Poincar\'e dual to $\xi$. Like in the previous proof, we let $n_P,n_Q,n_R,n_{\lambda},$ and $n_{-\lambda}$ be the number of boundary points of $L$ on the respective boundary components of $X$. Again, similarly to the previous proof, we have that $n_\lambda$ is even, $n_{-\lambda}$ is odd, and the sum  $n_P+n_Q+n_R+n_{\lambda}+n_{-\lambda}$ is even. Therefore, at least one of $n_P$, $n_Q$, and $n_R$ are odd. 

We claim that $n_P$ is odd if and only if $P\prec Q$ and $P\prec R$, $n_Q$ is odd if and only if $Q\prec P$ and $Q\prec R$, and $n_R$ is odd if and only if $R\prec Q$ and $R\prec P$. Without loss of generality, it suffices to prove this claim only for $n_P$.  From the definition of $\prec$ and the nontriviality of the cup product in $\RP^2$, we see that $$i^*_{P\hookrightarrow Q^c}(\omega_Q) \cup i^*_{P\hookrightarrow R^c}(\omega_R) = 1_{P\prec Q}\cdot 1_{P\prec R} \cdot \sigma_P$$ 
Since $i_{P\hookrightarrow Q^c}\circ \pi$ is homotopic to $i_{\partial N_P\hookrightarrow Q^c}$ and $i_{P\hookrightarrow R^c}\circ \pi$ is homotopic to $i_{\partial N_P\hookrightarrow R^c}$, we have 
$$ i^*_{\partial N_P\hookrightarrow Q^c}(\omega_Q) \cup i^*_{\partial N_P\hookrightarrow R^c}(\omega_R) = 1_{P\prec Q}\cdot 1_{P\prec R} \cdot \pi^*(\sigma_P)$$
So we have $$ i^*_{\partial N_P\hookrightarrow X} ( i_{X\hookrightarrow P^c}^*(\omega_P)\cup i_{X\hookrightarrow Q^c}^*(\omega_Q) ) = 1_{P\prec Q}\cdot 1_{P\prec R} \cdot \pi^*(\sigma_P)$$
By Lemma 2, we then have $$  i^*_{\partial N_P\hookrightarrow X}(\xi) = 1_{P\prec Q}\cdot 1_{P\prec R} \cdot \sigma_{\partial N_P} $$
which proves our claim.

 We can now prove transitivity. If we assume $P\prec Q$ and $Q\prec R$, then from antisymmetry, we know that $n_Q$ and $n_R$ are even.  Therefore, $n_P$ is odd so $P\prec R$. 
 \end{proof}

\begin{proof}[Proof of Theorem 2]
Combining Lemmas 3 and 4 gives us Theorem 2. 
\end{proof}

\section{Bounding the intersection probability}

Let $Y$ be the smooth 4-manifold obtained by gluing the boundary of $\R^2\times S^1\times [0,\infty) \setminus \{0\}\times S^1\times \{0\}$ to the boundary of $(0,\infty)\times S^1\times D^2$ with the identification $f: (0,\infty)\times S^1\times S^1\to  (\R^2\setminus\{0\})\times S^1\times \{0\}$ given by the formula $$f(r,e^{i\theta}, e^{i\phi})= (r\cos(\phi),r\sin(\phi),e^{i\cdot(2\phi+\theta)},0) $$ We can think of $Y$ as a manifold obtained by capping off (2,1) torus knots with disks. 

\begin{lem}
$Y$ is diffeomorphic to $\R\times \RP^3$.
\end{lem}

\begin{proof}
If we glue together two solid tori along their boundaries by taking meridians of one to (2,1) torus knots in the other, we get $\RP^3$. We see that $(\R^2\setminus \{0\})\times S^1\times\{0\}$ is diffeomorphic to $\R\times S^1\times S^1$ because $(\R^2\setminus \{0\})$ is diffeomorphic to $\R\times S^1$. This torus bundle structure on $(\R^2\setminus \{0\})\times S^1\times\{0\}$ extends to a fibration of $\R^2\times S^1\times [0,\infty) \setminus \{0\}\times S^1\times \{0\}$ by solid tori. To see this, take the diffeomorphism $\R^2\times [0,\infty)\setminus \{0\}\times\{0\}\to (0,\infty)\times D^2$ given by $v\mapsto (\|v\|,v/\|v\|)$, where $v$ is thought of as a vector in $\R^3$ with Euclidian norm, and we identify $D^2$ with the set of points on the unit sphere with nonnegative third coordinate. Taking the product of this disk bundle structure with $S^1$ gives us our fibration by solid tori. Now, we see that $\R^2\times S^1\times [0,\infty) \setminus \{0\}\times S^1\times \{0\}$ is diffeomorphic to $\R\times S^1\times D^2$. When we glue it to $(0,\infty)\times S^1\times D^2$, which is also diffeomorphic to $\R\times S^1\times D^2$, we are taking (2,1) torus knots to meridians in each fiber, so we see that $Y\simeq \R\times \RP^3$. 
\end{proof}

Let $M_1$ and $M_2$ be disjoint M\"obius strips smoothly embedded in $\R^2\times S^1\times [0,\infty)\setminus \{0\}\times S^1\times \{0\}$ so that $\partial M_1\cup \partial M_2$ is isotopic to $T(4,2)$ in $(\R^2\setminus\{0\})\times S^1\times \{0\}$. Then we say $M_1\prec M_2$ if when we cap off $\partial M_1$ and $\partial M_2$ with disjoint, isotopically standard disks in $(0,\infty)\times S^1\times D^2$ to get disjoint smoothly embedded projective planes $P_1$ and $P_2$ in $Y\simeq \R\times \RP^3$, we have $P_1\prec P_2$.

For this definition to make sense, we must verify that capping off a M\"obius strip like this yields a projective plane which represents the generator of $H_2(Y)$. Consider the cylinder in $Y$ given by the centers of all the disks in $(0,\infty)\times S^1\times D^2$. This cylinder is a closed subset of $Y$, so it is dual to a cohomology class in $H^2(Y)$. Any projective plane $P$ obtained by capping off a M\"obius strip will intersect this cylinder transversely once, and therefore evaluates nontrivially under the cohomology class. Thus, $[P]\in H_2(Y)$ is nontrivial, and is therefore the generator of $H_2(Y)\simeq \Z/2\Z$.

\begin{lem}
Let $M_1,...,M_n$ be disjoint M\"obius strips smoothly embedded in $\R^2\times S^1 \times [0,\infty) \setminus \{0\}\times S^1\times \{0\}$ with $\partial M_1 \cup  ... \cup \partial M_n$ isotopic to $ T(2n,n)\times\{0\}$  in $(\R^2\setminus\{0\})\times S^1 \times\{0\}$. Then the reflexive relation on $\{M_1,...,M_n\}$ which extends $\prec$ is a total ordering. 
\end{lem}

\begin{proof}
If we cap off all of the M\"obius strips with disjoint disks, we get disjoint homologically nontrivial projective planes in $Y\simeq \R\times \RP^3$. Therefore, this lemma is a direct implication of Theorem 2.
\end{proof}

\begin{crl}
Let $\gamma: S^1\to \R^2$ be a smooth Jordan curve.  Then for all integers $n\geq 2$, there exists an integer $k\in \{1,...,n-1\}$ so that $\gamma$ has an inscribed rectangle of aspect ratio $\tan(\frac{\pi k}{2n})$. 
\end{crl}

\begin{proof}
As in the proof of Theorem 1, let $M$ be the M\"obius strip in $\R^2\times S^1\times [0,\infty)$ parameterized by unordered pairs of elements of $S^1$ with the piecewise formula $$ \mu\{x,y\} = \left(\frac{\gamma(x) + \gamma(y)}{2},\arg^2(\gamma(x)-\gamma(y)),\|\gamma(x)-\gamma(y)\|\right)  \;\;\;\; \text{if} \;\;\;x\neq y$$  $$  \mu\{x,x\} = \left(\gamma(x),\arg^2(\gamma'(x)),0 \right) $$ Then, $\gamma$ has an inscribed rectangle of aspect ratio $\tan(\frac{\pi k}{2n})$ for some $k\in \{1,...,n-1\}$ if any two of the M\"obius strips $M, \;e^{i\pi/n}\cdot M, \;e^{2i\pi/2}\cdot M,\; e^{3i\pi/n}\cdot M,\;...,\;e^{(n-1)i\pi/n}\cdot M$ intersect each other. We proceed by means of contradiction, and we assume all of these listed M\"obius strips are disjoint. As there is a nontrivial $\Z/n\Z$ action on these M\"obius strips, we obtain a contradiction with Lemma 6. The group action would preserve the ordering $\prec$, but a finite totally ordered set cannot admit a nontrivial order preserving group action
\end{proof}

This fact, for $n\geq 3$, was first proven in \cite{Me}, and the case of $n=2$ is the classical inscribed square problem for smooth curves. The proof in \cite{Me} relied on nonorientable slice genus bounds for torus knots in $S^3$, which were translated to nonorientable slice genus bounds for torus knots in the solid torus. These slice genus bounds acted in place of Theorem 2 to obtain the results of that paper. As we see from the following corollary, we can also use Theorem 2 to recreate the bounds that were needed for the proof in the earlier work. 

\begin{crl}
For $n\geq 2$, the torus knot $T(2n,1)\subseteq \R^2\times S^1$ does not bound a M\"obius strip in $\R^2\times S^1\times[0,\infty)$.
\end{crl}

\begin{proof}
Suppose such a M\"obius strip existed. Take the $n$-fold covering map $\phi: \R^2\times S^1\times[0,\infty)\to \R^2\times S^1\times[0,\infty)$ that raises the $S^1$ coordinate to the $n$-th power. Then $\phi^{-1}(M)$ consists of $n$ disjoint M\"obius strips with a $\Z/n\Z$ symmetry with boundary $T(2n,n)\times\{0\}$. As in the proof of Corollary 1, this is impossible.
\end{proof}

For the case of $n=2$, this bound on nonorientable slice genus is actually stronger than the one used in \cite{Me}, but it is weaker for $n>3$.

\begin{crl}
Let $\gamma: S^1\to \R^2$ be a smooth Jordan curve. Then $\gamma$ has an inscribed square and an inscribed rectangle of aspect ratio $\sqrt{3}$. 
\end{crl}

\begin{proof}
Apply Corollary 1 to $n=2$ and $n=3$ respectively. 
\end{proof}

We now will prove Theorems 3 and 4. 

\begin{proof}[Proof of Theorem 3]
The boundary $\partial M$ is monotone, so the curves of the form $u\cdot \partial M$, for $u\in S^1$, fiber an isotopically standard torus in $\R^2\times S^1\times\{0\}$. Furthermore, $\partial M$ is isotopic to $T(2,1)\times\{0\}$ so any $n$ pairwise disjoint M\"obius strips of the form $u\cdot M$ will have boundary isotopic to $T(2n,1)\times \{0\}$, fulfilling the hypothesis of Lemma 6. 

Let $X$ be the subset of $S^1$ given by $$ X = \{u\in S^1: M\cap\; u \cdot M = \varnothing \;\; \text{and} \;\; M\prec u\cdot M\} $$  Using $u^{-1}$ instead of $u$ has the effect of swapping the M\"obius strips being compared by $\prec$, so, by antisymmetry, we can conclude that $X$ is disjoint from $X^{-1}$ and $$X\cup X^{-1} = \{u\in S^1: M\cap\; u \cdot M = \varnothing \}$$  
It should be noted that $X$ is open and is therefore measurable. Now, we proceed by means of contradiction. Let $\mu$ be the Haar probability measure on $S^1$. We will assume that $\mathbf{P}(M\cap \;u\cdot M \neq \varnothing) < 1/3$ for uniformly random $u$, and thus we have that $\mu(X) > 1/3$. 

Now, we know that $\mu(X \cdot X) > 2/3$ by Kemperman's theorem  \cite{Kemperman}. (The $S^1$ case of Kemperman's theorem was first proven by Raikov \cite{Raikov}.) Since $\mu(X^{-1}) + \mu(X \cdot X) > 1$, we know that $(X\cdot X) \cap (X^{-1})$ is nonempty. Thus, we have $a,b,c\in X$ such that $a \cdot b \cdot c = 1$.  Let $M_1 = M$, $M_2 = a\cdot M$, and $M_3 = (a\cdot b)\cdot M = (c^{-1})\cdot M$. Finally, since $a,b,$ and $c$ are all in $X$, we have $M_1\prec M_2 \prec M_3 \prec M_1$. This yields a contradiction of transitivity. 
\end{proof}

\begin{proof}[Proof of Theorem 4]
Boy's surface is a particular smooth immersion of $\RP^2$ in $S^3$ \cite{Boy}\cite{Kirby}.  We will use this immersion to construct a M\"obius strip with the desired properties. First of all, we will describe Boy's surface through CW complexes. We give a CW decomposition of $\RP^2$ with three 0-cells $A,B,C$, six $1$-cells $P_{AA},P_{BB},P_{CC},$ $P_{AB},P_{BC},P_{CA}$, and four 2-cells $F_A, F_B, F_C,F_0$. See Figure 1 for a diagram. If $X$ and $Y$  are from the set $\{A,B,C\}$, the 1-cell $P_{XY}$ is a path from $X$ to $Y$, and the 2-cell $F_X$ has boundary $P_{XX}$. To finish describing the CW decomposition, we only need to describe the boundary of $F_0$. This is the path $$P_{AA}^{-1}P_{AB}P_{BC}P_{CC}^{-1}P_{CA}P_{AB}P_{BB}^{-1}P_{BC}P_{CA}$$ Boy's surface is a smooth immersion of $\RP^2$ into $S^3$ with one triple point $T$ in $S^3$, and three loops of double points going from $T$ to $T$, which we will call $L_A, L_B$, and $L_C$. The immersion function maps $P_{AA}$ and $P_{BC}$ onto $L_A$, maps $P_{BB}$ and $P_{CA}$ onto $L_B$, and maps $P_{CC}$ and $P_{AB}$ onto $L_C$. We have chosen the orientations of the paths so that these mappings are orientation preserving. Now, we will create a M\"obius strip in $\R^2\times [0,\infty)$ by excising from $S^3$ a small open ball centered at a point in the image of the interior of the face $F_0$, and then blowing up to infinity some point on the boundary of that ball that is not on our surface. This M\"obius strip has essentially the same decomposition as our projective plane, except that $F_0$ is now an annulus rather than a 2-cell, and the other boundary component of this annulus maps to a simple closed curve in $\R^2\times \{0\}$, which we may take to be the unit circle.  

\begin{figure}[h]
\caption{The CW complex in $\RP^2$ whose 1-skeleton maps to the self-intersections of Boy's surface.}
\centering
\includegraphics[scale = 0.9]{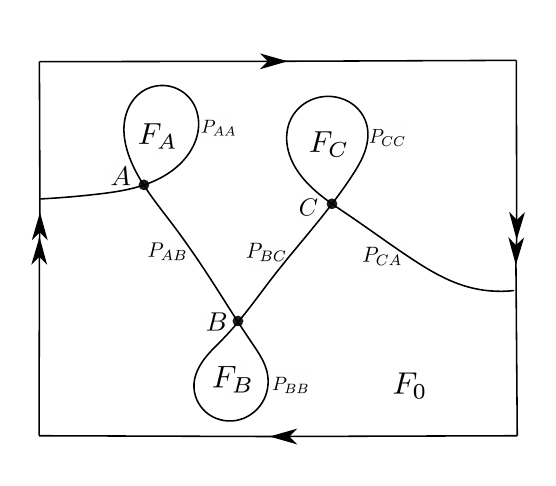}
\end{figure}

We will now assign an element of $S^1$ to every point on this M\"obius strip to describe a smoothly embedded M\"obius strip in $\R^2\times S^1\times [0,\infty)$. The points $A$, $B$, and $C$ map to $1$, $e^{2\pi i/3}$, and $e^{4\pi i/3}$ respectively. Now, for $X,Y$ from $\{A,B,C\}$, each path $P_{XY}$ simply goes in the positive direction of $S^1$ from the point in $S^1$ assigned to $X$ to the point in $S^1$ assigned to $Y$. Thus, $P_{AA}, P_{BB}, $ and $P_{CC}$ remain constant at $1$, $e^{2\pi i/3}$, and $e^{4\pi i/3}$ respectively, while the other three 1-cells each traverse exactly one third of the circle. From the way that our 1-cells map onto the loops of double points, we can now see that for any double point in our immersion, the difference in $S^1$ coordinate between the two points is in the set $\{e^{i\theta}: \theta\in [2\pi/3,4\pi/3]\}$. We can also see that it is possible to extend this $S^1$ assignment to the 2-cells $F_A, F_B,$ and $F_C$ by just making it the corresponding constant on each cell.  On the boundary of $F_0$, our $S^1$ assignment is a degree two map. Therefore, we can extend the $S^1$ assignment to the punctured $F_0$ annulus so that the boundary of our M\"obius strip is $T(2,1)\times\{0\}$. 

Finally, we can see that this M\"obius strip satisfies the criteria of Theorem 4. To see why it only intersects one third of the rotations, note that intersections between the M\"obius strip and its rotation by $u\in S^1$ will correspond to double points of the immersion with the $S^1$ coordinate difference between the two points equal to $u$.  The only such $u$ are elements of the set $\{e^{i\theta}: \theta\in [2\pi/3,4\pi/3]\}$, which only comprises one third of the circle. 
\end{proof}

\bibliography{Refrences}{}
\bibliographystyle{plain}

\end{document}